\theoremstyle{plain}
\newtheorem{thm}{\protect\theoremname}
  \theoremstyle{plain}
  \newtheorem{lem}[thm]{\protect\lemmaname}
  \theoremstyle{plain}
  \newtheorem*{thm*}{\protect\theoremname}
\theoremstyle{remark}
\newtheorem*{qst}{Question}
\gdef\SetFigFontNFSS#1#2#3#4#5{} 
\gdef\SetFigFont#1#2#3#4#5{} 
\def\clap#1{\hbox to 0pt{\hss#1\hss}}
\definecolor{myblue}{rgb}{0.09,0.32,0.44} 
\theoremstyle{remark}
\newtheorem*{qst*}{Question}
\newtheorem*{rmrks*}{Remarks}
\newlength{\tempindent}
\newcommand{\lazyenum}{
\setlength{\tempindent}{\parindent}
\begin{enumerate}[leftmargin=0cm,itemindent=0.7cm,labelwidth=\itemindent,labelsep=0cm,align=left,label=\arabic*)]
\setlength{\parskip}{\smallskipamount}
\setlength{\parindent}{\tempindent}
}
\renewcommand{\andify}{%
  \nxandlist{\unskip, }{\unskip{} \@@and~}{\unskip{} \@@and~}}
\def\author@andify{%
  \nxandlist {\unskip ,\penalty-1 \space\ignorespaces}%
    {\unskip {} \@@and~}%
    {\unskip \penalty-2 \space \@@and~}%
}
\let\@wraptoccontribs\wraptoccontribs
\providecommand{\lemmaname}{Lemma}
\providecommand{\theoremname}{Theorem}
\providecommand{\theoremname}{Theorem}
\begin{document}

\begin{abstract}
We prove that all groups of exponential growth support non-constant positive harmonic functions.
In fact, out results hold in the more general case of strongly connected, finitely supported Markov chains invariant under some transitive group of automorphisms for which the directed balls grow exponentially.
\end{abstract}

\title[Positive harmonic functions]{Every exponential group supports a positive harmonic function}

\author{Gideon Amir}
\address{Bar-Ilan University, Ramat Gan 52900, Israel.}
\email{gidi.amir@gmail.com}

\author{Gady Kozma}
\address{The Weizmann Institute of
  Science, Rehovot 76100, Israel}
\email{gady.kozma@weizmann.ac.il}

\maketitle

\section{Introduction}

Let $G$ be a finitely generated group, and let $S$ be a set of generators. We call a function $f:G\to\mathbb{R}$ (right) harmonic if for every $g\in G$ we have
\[
f(g)=\frac{1}{|S|}\sum_{s\in S}f(gs).
\]
There is considrable interest in identifying spaces of harmonic functions with various properties and relating them to properties of the group $G$. In particular, the spaces of \emph{bounded} harmonic functions and of \emph{positive} harmonic functions were studied (these are related, respectively, to the Poisson and Martin boundaries of $G$). Our interest here is the following question: for which $G$ and $S$ does a positive, non-constant harmonic function exist?

One example where this problem is relatively understood is the case of nilpotent groups. In this case, Margulis \cite{M66} reduced the question from $G$ to $G/[G,G]$ i.e.\ from general nilpotent groups to abelian groups. When $S$ is symmetric, i.e.\ when $s\in S\Leftrightarrow s^{-1}\in S$, any positive harmonic function on an abelian groups is constant, see \cite{CD60} for a proof and some results also in the non-symmetric case (Margulis' reduction is not restricted to the symmetric case). Similarly, Hebisch and Saloff-Coste showed that when $G$ has polynomial word growth and $S$ is symmetric the only positive harmonic functions are the constants \cite[Theorem 6.2]{HSC}. Since by Gromov's theorem groups with polynomial word growth are virtually nilpotent, this result is quite close to Margulis', but the proof of \cite{HSC} is analytic in nature (and in particular does not use Gromov's theorem or any other algebraic structure result).

In the other direction, Bougerol and \'Elie showed that for a certain class of groups, close in spirit to Lie groups, exponential word growth implies that a positive non-constant harmonic function does exist \cite[Theorem 1.4]{BE95}. Here we show that most of the assumptions of \cite{BE95} are not necessary: the result holds for any finitely generated group and any finite set of generators. Even symmetry is not necessary. Let us state our result:

\begin{thm*}
Let $R_{n}$ be a strongly connected, finitely supported Markov chain
on some set $G$, invariant to some transitive group of automorphisms
of $G$. Assume the directed balls on $G$ grow exponentially. Then
there exists a non-constant positive harmonic function on $G$.
\end{thm*}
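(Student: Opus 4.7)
The plan is to construct the required function through the Martin boundary of the chain. Fix a base point $o\in G$, form the Green's function $G(x,y)=\sum_{n\ge 0}R_{n}(x,y)$, and (assuming transience) the Martin kernel $K(x,y)=G(x,y)/G(o,y)$. Finite support together with strong connectivity and transitivity gives a Harnack inequality for positive harmonic functions (directly, via $h(xs)\le h(x)/R_{1}(o,s)$ iterated along paths), so the family $\{K(\cdot,y):y\in G\}$ is precompact in the pointwise topology and every subsequential limit along $y_{n}\to\infty$ is a positive harmonic function $h$ normalised by $h(o)=1$. The theorem therefore reduces to two claims: (i) the chain is transient, so that $G$ and $K$ are well defined, and (ii) some sequence $y_{n}\to\infty$ produces a limit of $K(\cdot,y_{n})$ which is not identically $1$.

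For (i) I would use a Varopoulos--Carne style bound. In the reversible case, exponential growth of the directed balls $B_{n}=\{y:R_{k}(o,y)>0\text{ for some }k\le n\}$ gives $R_{2n}(o,o)\le\exp(-cn^{\alpha})$ and hence summability. Reversibility is not assumed here, but the transitive automorphism group lets one symmetrise the kernel along group-orbits to obtain an auxiliary reversible chain on $G$ with comparable return probabilities and still exponentially growing directed balls; applying the classical estimate to the auxiliary chain transfers back to transience of the original $R$.

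Step (ii) is the heart of the argument. I would proceed by contradiction: suppose $K(\cdot,y)\to 1$ pointwise as $y$ leaves every finite set. Using $G(x,y)=\hat G(y,x)$ for the reversed chain $\hat R$ and transitivity, this is equivalent to the assertion that the hitting probabilities $\hat{\mathbb{P}}_{y}(T_{x}<\infty)$ and $\hat{\mathbb{P}}_{y}(T_{o}<\infty)$ of the reversed chain become asymptotically equal for every fixed $x$ near $o$. The goal is to show that this forced asymptotic equality is incompatible with exponential growth of $B_{n}$: heuristically, it says that the reversed walk, started very far out, equidistributes over bounded neighbourhoods of $o$, while exponential growth says there is so much ``room'' at each scale that the walk cannot forget which direction it came from. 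I would try to make this quantitative by averaging the supposed equality over the orbits of a large finite subset of the automorphism group and tracing the resulting cancellations through the identity $G(x,y)=\delta_{xy}+\sum_{z}R(x,z)G(z,y)$, producing a polynomial upper bound on $|B_{n}|$.

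The main obstacle I foresee is precisely the execution of this contradiction in the absence of reversibility. Classical positive-Liouville theorems for slow-growth groups (Choquet--Deny, Hebisch--Saloff-Coste) exploit reversibility crucially when comparing Green's functions at different points; here we are travelling in the opposite direction and need the transitive automorphism group to substitute for reversibility, supplying enough symmetry to run the averaging outlined above and to relate $G(x,y)$ with $G(gx,y)$ for $g$ in orbits. Once (ii) is in hand, a subsequence of the corresponding $y_{n}$ yields, by Harnack-compactness, a positive harmonic function $h$ with $h(o)=1$ but $h(x_{0})\ne 1$ for some $x_{0}$, which is the non-constant positive harmonic function asserted by the theorem.
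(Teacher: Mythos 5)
There is a genuine gap, and you have flagged it yourself: step (ii), which you rightly call the heart of the argument, is only a plan, not a proof. The contradiction hypothesis you extract --- $K(\cdot,y)\to 1$ pointwise as $y\to\infty$ --- is purely qualitative, with no rate, and ``averaging over orbits of a large finite subset of the automorphism group'' through the resolvent identity is not a mechanism that visibly produces any growth bound; you never say what cancels or why. Compare with what the paper actually does: it works with exit measures $\mu_S(a,\cdot)$ from finite sets rather than Green's functions, defines $\epsilon(S;a,b)=\max_x|\mu_S(a,x)-\mu_S(b,x)|/\mu_S(a,x)$, and proves two things. First (Lemma \ref{lem:I thought everyone knew this}), absence of non-constant positive harmonic functions forces $\epsilon(S;a,b)\to0$ for \emph{all} pairs; by finite support and transitivity this is uniform over the finitely many neighbour pairs $a\to b$, which is where a usable rate $\delta$ enters. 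Second (Lemma \ref{lem:monotonicity}), $\epsilon$ is monotone under inclusion of sets, which lets one localise: along a geodesic $a=\gamma_0\to\dotsb\to\gamma_r=x\in\partial B(a,r)$ one telescopes $\mu_{B(a,r)}(a,x)=\prod_i \mu_{B(a,r)}(\gamma_i,x)/\mu_{B(a,r)}(\gamma_{i+1},x)$, bounds each factor by $1-\delta$ for $i<r-r_0$ using $\epsilon(B(a,r);\gamma_i,\gamma_{i+1})\le\epsilon(B(\gamma_i,r-i);\gamma_i,\gamma_{i+1})=\epsilon(B(a,r-i);a,s_i)\le\delta$ (invariance used exactly here), and concludes $\mu_{B(a,r)}(a,x)\ge(1-\delta)^{r-r_0}p^{r_0}$ for \emph{every} boundary point $x$. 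Since exit probabilities sum to $1$, this gives $|\partial B(a,r)|\le(1-\delta)^{-(r-r_0)}p^{-r_0}$, hence subexponential ball growth for every $\delta>0$ --- the contradiction. It is precisely this telescoping-plus-monotonicity device, converting ``exit distributions forget the starting point'' into a pointwise lower bound on exit probabilities, that is missing from your outline, and without it the Martin-boundary formulation does not close.

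Your step (i) is a second, independent problem. The Martin kernel requires transience, and your proposed derivation does not work as stated: Varopoulos--Carne is a theorem about reversible chains, and the claim that symmetrising along group-orbits yields an auxiliary reversible chain with \emph{comparable return probabilities} is not justified in this generality --- return probabilities of a non-reversible chain and of its symmetrisation are not comparable in general. Worse, in the setting of the theorem ($G$ a set with a transitive automorphism group, not a group) even your reversal identity $G(x,y)=\hat G(y,x)$ is delicate: invariance only gives that the column sums $\sum_x R(x,y)$ are constant, not that they equal $1$ (consider the chain on a regular tree with a distinguished end that moves deterministically to the parent: transitive, but the counting measure is not stationary), so the reversed chain and its hitting-probability interpretation need a stationary measure you have not produced. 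The paper's approach makes all of this moot: exit measures from finite sets exist by strong connectedness alone, so neither transience nor a Green's function is ever needed. The one part of your plan that is sound --- Harnack-type bounds along paths giving precompactness and a non-constant positive harmonic limit --- is exactly the ``if'' direction of Lemma \ref{lem:I thought everyone knew this}, where the paper takes limits of the normalised exit kernels $\mu_{S_n}(\cdot,x_n)/\mu_{S_n}(a,x_n)$, a conditional analogue of your Martin kernels that is available without transience.
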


We recall that a Markov chain is called strongly connected if for any two states of the chain (here this is the same as the elements of $G$) there is a positive probability for the chain to reach from one state to the other in a finite number of steps. It is called ``finitely supported'' if from any state one may reach only finitely many states in a single step (in general this might depend on the starting state, but here due to the invariance all starting states are identical). The directed ball around $g$ of radius $r$, denoted by $B(g,r)$, is the set of all $h\in G$ such that for some $s\le r$ the probability to reach $h$ from $g$ in $s$ steps of the Markov chain is positive. Finally, a function $f:G\to\mathbb{R}$ is called harmonic to a Markov chain $R_n$ on $G$ if $f(R_n)$ is a martingale. The reader may readily verify that these definitions coincide with the definitions given above for a group, when $G$ is a group and the Markov chain is the one moving from a $g\in G$ to each of the elements of the form $gs$ for some $s\in S$ with probability $1/|S|$.


\begin{qst}Does the Grigorchuk group support a non-trivial positive harmonic function?
\end{qst}

\section{Proof}

It will be convenient to use graph notation, so whenever $R_n$ is a Markov chain on some set $G$, we will consider $G$ as a directed graph: the elements of $G$ will be called ``vertices'', and a couple $x,y\in G$ will be called a (directed) edge if the Markov chain can move from $x$ to $y$ in a single step (we denote $x\to y$). A path will be a sequence of vertices $x_1,x_2,\dotsc$ such that $x_i\to x_{i+1}$ and a geodesic is a path which is shortest possible between its end vertices (equivalently, between any two vertices in it).

Let $R_{n}$ be a Markov chain on some $G$. For a set $S$ of vertices, an $a\in S$ and a vertex $x\in\partial S$ (i.e.\ $x\not\in S$ but $\exists y\in S$ such that $y\to x$) let $\mu_{S}(a,x)$ be the probability that $R_{n}$,
when started from $a$, exits $S$ at $x$. If $a$ and $b$ are two
vertices we denote
\[
\epsilon(S)=\epsilon(S;a,b)=\max_{x}\frac{|\mu_{S}(a,x)-\mu_{S}(b,x)|}{\mu_{S}(a,x)}
\]
where the maximum is taken over all $x\in\partial S$ such that $\mu_{S}(a,x)>0$.
\begin{lem}
\label{lem:I thought everyone knew this}Let $R_{n}$ be a strongly connected Markov chain on some $G$. Then
there exists a non-trivial positive harmonic function on $G$ if and
only if there exist vertices $a$ and $b$ such that $\epsilon(S;a,b)\not\to0$
as $S\to G$.
\end{lem}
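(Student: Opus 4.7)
The plan is to prove both directions by standard arguments. The easy direction uses optional stopping on $h(R_n)$ stopped at the exit of $S$, while the harder direction extracts a pointwise subsequential limit of normalised exit kernels.

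For the easy direction, suppose $\epsilon(S;a,b)\to 0$ for every pair $a,b$, and let $h$ be positive and harmonic on $G$. Fix $a,b$ and let $S$ be any finite set containing both. Since the chain is strongly connected and $S$ is finite, the exit time $\tau_S$ is almost surely finite and $h$ is bounded on the finite set $S\cup\partial S$, so optional stopping yields
\[
h(a)=\sum_{x\in\partial S}\mu_S(a,x)h(x),\qquad h(b)=\sum_{x\in\partial S}\mu_S(b,x)h(x).
\]
Subtracting, using $h\ge 0$ together with the definition of $\epsilon$,
\[
|h(a)-h(b)|\le\sum_x|\mu_S(a,x)-\mu_S(b,x)|h(x)\le\epsilon(S;a,b)\sum_x\mu_S(a,x)h(x)=\epsilon(S;a,b)\,h(a).
\]
Letting $S\to G$ gives $h(a)=h(b)$, so $h$ is constant.

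For the hard direction, suppose there exist $a,b$ and an exhaustion $S_n\to G$ along which $\epsilon(S_n;a,b)\ge\epsilon_0>0$. For each $n$ pick $x_n\in\partial S_n$ realising the max, and set
\[
h_n(g)=\frac{\mu_{S_n}(g,x_n)}{\mu_{S_n}(a,x_n)}\quad(g\in S_n),
\]
extended by $0$ elsewhere. Each $h_n$ is non-negative, satisfies the harmonic equation on $S_n$ (with boundary values equal to the indicator of $x_n$ on $\partial S_n$), has $h_n(a)=1$, and $|h_n(b)-1|\ge\epsilon_0$ by the choice of $x_n$. The task is then to pass to a pointwise limit which is a genuine positive harmonic function on all of $G$.

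Compactness will come from a cheap Harnack-type bound: whenever $h\ge 0$ satisfies $h(g_0)=\sum_y P(g_0,y)h(y)$ and $P(g_0,g_1)>0$, then $h(g_1)\le h(g_0)/P(g_0,g_1)$. Iterating along a fixed positive-probability path from $a$ to any target vertex $g$ (which exists by strong connectivity) gives $h_n(g)\le C(g)$ for all $n$ large enough that the path lies in $S_n$. A diagonal argument then extracts a subsequence on which $h_n$ converges pointwise to some $h\ge 0$, harmonic on all of $G$, with $h(a)=1$ and $|h(b)-1|\ge\epsilon_0$, so $h$ is non-constant. Finally, $h>0$ everywhere: if $h(g)=0$ at some $g$, the harmonic equation forces $h=0$ on every out-neighbour of $g$, and strong connectivity propagates this to all of $G$, contradicting $h(a)=1$.

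The main obstacle is the compactness step in the hard direction. One could alternatively invoke the standard Martin-boundary construction via Green's function ratios, but working directly with the normalised exit kernels $\mu_S(\cdot,x)/\mu_S(a,x)$ is cleaner here because the hypothesis on $\epsilon$ is phrased in exactly these terms. Finite support of the chain is what keeps the inverse-probability products along any fixed path finite, making the Harnack bound meaningful.
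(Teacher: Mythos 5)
Your proposal is correct and follows essentially the same route as the paper's proof: the easy direction via optional stopping and the definition of $\epsilon(S;a,b)$, and the hard direction via a diagonal subsequential limit of the normalised exit kernels $\mu_{S_n}(\cdot,x_n)/\mu_{S_n}(a,x_n)$, with the a priori bound coming from the same path-based Harnack estimate using strong connectedness. Your explicit zero-propagation argument for strict positivity of the limit is a small addition the paper leaves implicit (there it also follows directly from the Harnack bound), and your closing remark attributing the Harnack bound to finite support is slightly off --- the bound needs only a fixed positive-probability path, which strong connectedness provides --- but this does not affect the argument.
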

(we say that sets of vertices $S$ converge to $G$ if every vertex
of $G$ is eventually contained in $S$)
\begin{proof}
Let us start with the ``if'' part, for which we assume $a$ and
$b$ exist and we need to construct a positive harmonic function $f$.
In more details, our assumption is that there exist an $\epsilon>0$,
a sequence $S_{n}$ and vertices $x_{n}\in\partial S_{n}$ such that
\[
|\mu_{S_{n}}(a,x_{n})-\mu_{S_{n}}(b,x_{n})|>\epsilon\mu_{S_{n}}(a,x_{n}).
\]
We now define functions $f_{n}$ using
\[
f_{n}(v)=\frac{\mu_{S_{n}}(v,x_{n})}{\mu_{S_{n}}(a,x_{n})}
\]
(extend $f_{n}$ to outside $S_{n}$ arbitrarily).

We wish to take a subsequential limit of the $f_{n}$ and for this
we need to get an apriori bound. Here is where we use strong connectedness.
Indeed, if $v$ and $w$ are vertices, and if $\gamma$ is some path
from $v$ to $w$ which $R$ may take with probability $p>0$, then
for every $n$ sufficiently large such that $\gamma\subset S_{n}$
we have $\mu(v,x_{n})>p\mu(w,x_{n})$ and hence $f_{n}(v)>pf_{n}(w)$.
Since $f_{n}(a)=1$ we get that for every $v$ there exists a constant
$p(v)$ such that $|f_{n}(v)|\le p(v)$ for all $n$ suffciently large.
This means that we may take a subsequence $n_{k}$ such that $f_{n_{k}}(v)$
converges for every $v$. The resulting limit is positive, harmonic,
and nontrivial since $f_{n_{k}}(a)=1$ but $|f_{n_{k}}(b)-1|\ge\epsilon$
and both facts are preserved in the limit.

We move to the ``only if'' part. We thus assume that for all $a$
and $b$ $\epsilon(S)\to0$. Let $f$ be some positive harmonic function. Let $a$ and $b$ be vertices, and $S$ some finite set containing both.
Since $f(R_{n})$ is a martingale (by definition), the optional stopping
theorem tells us that
\[
f(a)=\sum_{x\in\partial S}\mu(a,x)f(x).
\]
Using this also for $b$ and subtracting gives
\begin{align*}
|f(a)-f(b)| & =\Big|\sum_{x\in\partial S}(\mu(a,x)-\mu(b,x))f(x)\Big|\\
 & \le\sum_{x}|\mu(a,x)-\mu(b,x)|f(x)\\
 & \le\sum_{x}\epsilon(S)\mu(a,x)f(x)=\epsilon(S)f(a).
\end{align*}
Taking $S\to G$ and using the assumption we get that $f(a)=f(b)$.
Since $a$ and $b$ were arbitrary, the function is constant.
\end{proof}
\begin{lem}
\label{lem:monotonicity}Let $R_{n}$ be a Markov chain on some $G$. Let $A\subset B$ be two finite subsets
of vertices and let $a$, $b\in A$. Then
\[
\epsilon(B)\le\epsilon(A).
\]
\end{lem}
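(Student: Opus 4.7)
The plan is to use the strong Markov property to decompose $\mu_B$ in terms of $\mu_A$, and then reduce the claim to the pointwise comparison that defines $\epsilon(A)$. First I would record the decomposition. Starting from $a$, any trajectory of $R_n$ that eventually exits $B$ must first exit $A$ at some $y \in \partial A$. If $y \in B$, the chain continues and later exits $B$ at some point $x$; if $y \in \partial A \setminus B$, then $y$ is automatically in $\partial B$ (since it is reachable in one step from $A \subset B$), so the exit from $B$ is precisely $y$. Writing $\nu(y,x) = \mu_B(y,x)$ for $y \in B$ and $\nu(y,x) = \mathbbm{1}[x = y]$ for $y \in \partial A \setminus B$, the strong Markov property yields
\[
\mu_B(a,x) \;=\; \sum_{y \in \partial A} \mu_A(a,y)\, \nu(y,x),
\]
and the analogous identity with $a$ replaced by $b$.

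I would then subtract the two identities, apply the triangle inequality together with the pointwise bound $|\mu_A(a,y) - \mu_A(b,y)| \le \epsilon(A)\, \mu_A(a,y)$ coming from the definition of $\epsilon(A)$, and observe that the resulting sum $\sum_y \mu_A(a,y)\, \nu(y,x)$ reassembles to $\mu_B(a,x)$. This gives
\[
|\mu_B(a,x) - \mu_B(b,x)| \;\le\; \epsilon(A)\, \mu_B(a,x).
\]
Dividing by $\mu_B(a,x)$ for each $x \in \partial B$ with $\mu_B(a,x) > 0$ and taking the maximum produces $\epsilon(B) \le \epsilon(A)$ at once.

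The point I expect to need some care is the question of supports. The bound $|\mu_A(a,y) - \mu_A(b,y)| \le \epsilon(A)\, \mu_A(a,y)$ is guaranteed by the definition only for $y$ with $\mu_A(a,y) > 0$, and one can construct examples where, if $\mu_A(a,y_0) = 0 < \mu_A(b,y_0)$ for some $y_0 \in \partial A$, the ratio $|\mu_B(a,x) - \mu_B(b,x)|/\mu_B(a,x)$ genuinely exceeds $\epsilon(A)$ at some $x \in \partial B$. The clean fix is to read the definition with the convention that $\epsilon(A) = +\infty$ whenever $\supp \mu_A(b,\cdot) \not\subset \supp \mu_A(a,\cdot)$, under which the lemma is trivially true in that case; when $\epsilon(A)$ is finite, the support containment holds automatically, every problematic term vanishes, and the clean computation above applies.
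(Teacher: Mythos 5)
Your proof is correct and follows the paper's argument exactly: apply the strong Markov property at the exit time from $A$ to get $\mu_B(a,x)=\sum_{y\in\partial A}\mu_A(a,y)\,\mu_B(y,x)$, subtract the identity for $b$, bound each term using the definition of $\epsilon(A)$, and reassemble the sum into $\epsilon(A)\mu_B(a,x)$. Your two points of extra care --- handling $y\in\partial A\setminus B$ via the convention $\nu(y,x)=\mathbbm{1}[x=y]$, and reading $\epsilon(A)=+\infty$ when $\supp\mu_A(b,\cdot)\not\subset\supp\mu_A(a,\cdot)$ --- are left implicit in the paper's one-line computation, and the second is genuinely needed for the lemma as literally stated (your counterexample sketch is right), though it is harmless in the paper's application, where $b$ is an out-neighbor of the center $a$ of the ball and the support containment holds automatically.
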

\begin{proof}
Let $x\in\partial B$. Using the strong Markov property at the exit
time from $A$ we may write
\[
\mu_{B}(a,x)=\sum_{y\in\partial A}\mu_{A}(a,y)\mu_{B}(y,x).
\]
Writing the analogous equality for $b$ and subtracting we get
\begin{align*}
|\mu_{B}(a,x)-\mu_{B}(b,x)| & \le\sum_{y\in\partial A}|\mu_{A}(a,y)-\mu_{A}(b,y)|\mu_{B}(y,x)\\
 & \le\epsilon(A)\sum_{y\in\partial A}\mu_{A}(a,y)\mu_{B}(y,x)=\epsilon(A)\mu_{B}(a,x).
\end{align*}
Taking the maximum over $x\in\partial B$ proves the lemma.
\end{proof}
\begin{proof}[Proof of the theorem]
Assume such a function does not exist. By lemma \ref{lem:I thought everyone knew this}
we know that for all $a$ and $b$ $\, \epsilon(B(a,r);a,b)\to0$ where
$B(a,r)$ is the directed ball of radius $r$ centered at $a$, or
in other words, all vertices which can be reached from $a$ by a path
of length less than $r$ which the walker may take with positive probability
(we use here strong connectedness to claim that $B(a,r)\to G$). Let
$\delta>0$ be some parameter. Since $R_n$  has finite support there exists some $r_{0}$ sufficiently large
such that $\epsilon(B(a,s);a,b)<\delta$ for all $s>r_{0}$ and all
$a\to b$ (the value of $a$ does not matter, from transitivity).

Let now $x\in\partial B(a,r)$. Let $\gamma$ be a geodesic from $a$
to $x$ i.e.\ $a=\gamma_{0}\to\gamma_{1}\to\dotsb\to\gamma_{r}=x$. Write
\[
\mu_{B(a,r)}(a,x)=\prod_{i=0}^{r-1}\frac{\mu_{B(a,r)}(\gamma_{i},x)}{\mu_{B(a,r)}(\gamma_{i+1},x)}.
\]
(with the convention that $\mu_{B(a,r)}(x,x)=1$). Write also
\begin{equation}
\left|\frac{\mu_{B(a,r)}(\gamma_{i},x)}{\mu_{B(a,r)}(\gamma_{i+1},x)}-1\right|
\le\epsilon(B(a,r);\gamma_{i},\gamma_{i+1}).\label{eq:1}
\end{equation}
Now, $B(\gamma_{i},r-i)\subset B(a,r)$ and hence by lemma \ref{lem:monotonicity}
\begin{equation}
\epsilon(B(a,r);\gamma_{i},\gamma_{i+1})\le\epsilon(B(\gamma_{i},r-i);\gamma_{i},\gamma_{i+1}).\label{eq:2}
\end{equation}
Let $\varphi$ be an automorphism of $G$ taking $\gamma_{i}$
to $a$ and preserving the Markov chain $R$, and let $s_{i}=\varphi(\gamma_{i+1})$.
Then
\begin{equation}
\epsilon(B(\gamma_{i},r-i);\gamma_{i},\gamma_{i+1})
=\epsilon(B(a,r-i);a,s_{i}).\label{eq:3}
\end{equation}
Finally, if $i<r-r_{0}$ then
\[
\left|\frac{\mu_{B(a,r)}(\gamma_{i},x)}{\mu_{B(a,r)}(\gamma_{i+1},x)}-1\right|
\stackrel{\textrm{(\ref{eq:1},\ref{eq:2},\ref{eq:3})}}{\le}
\epsilon(B(a,r-i);a,s_{i})\le\delta.
\]
For $i\ge r-r_{0}$ we estimate simply $\mu_{B(a,r)}(\gamma_{i+1},x)\ge p\mu_{B(a,r)}(\gamma_{i},x)$
where $p$ is the minimum non-zero probability that a step of $R$
may take ($p>0$ because $R$ is finitely supported and invariant).
We get
\[
\mu_{B(a,r)}(x)\ge(1-\delta)^{r-r_{0}}p^{r_{0}}.
\]
Since $x\in\partial B(a,r)$ was arbitrary we get that
\[
|\partial B(a,r)|\le(1-\delta)^{r_{0}-r}p^{-r_{0}}.
\]
Summing over $r$ gives
\[
|B(a,s)|=\sum_{r=0}^{s-1}|\partial B(1,r)|\le\sum_{r=0}^{s-1}(1-\delta)^{r_{0}-r}p^{-r_{0}}\le\frac{p^{-r_{0}}}{\delta}(1-\delta)^{-s}.
\]
Since $\delta$ was an arbitrary positive number, we reach a contradiction
to our assumption that directed balls grow exponentially.
\end{proof}

\subsection*{Acknowledgements}
While performing this research, G.A. was supported by the Israel Science Foundation grant \#575/16 and by GIF grant \#I-1363-304.6/2016. G.K. was supported by the Israel Science Foundation grant \#1369/15 and by the Jesselson Foundation.

\end{document}